\newtheorem{theorem}{Theorem}[section]
\newcommand{\R}{\mathbb{R}}
\newcommand{\F}{\mathcal{F}}
\renewcommand{\leq}{\leqslant}
\renewcommand{\ge}{\geqslant}
\renewcommand{\le}{\leqslant}
\def\cref#1{Corollary~$\ref{#1}$}
\def\b1{\bar{1}}
\def\cb1{\cdot \bar{1}}
\newcommand{\T}{\mathcal{T}}
\title{Bounds on  piercing and line-piercing numbers in families of convex sets in the plane}
\author{Shira Zerbib \thanks{Department of Mathematics, Iowa State University,  USA. Email: zerbib@iastate.edu. The author was supported by NSF grant DMS-1953929.}}
\date{}
\begin{document}
\maketitle 

\begin{abstract}
A family of sets has the $(p, q)$ property if
among any $p$ members of it some $q$ intersect.
It is shown that  if a finite family of compact convex sets in $\R^2$ has the $(p+1,2)$ property then it is pierced by $\lfloor \frac{p}{2} \rfloor +1$ lines.  A colorful version of this result is proved as well. As a corollary, the following is proved: Let 
$\F$ be a finite family of compact  convex sets in the plane with no isolated sets, and let $\F'$ be the family of its pairwise intersections. If $\F$ has the $(p+1,2)$ property and $\F'$ has   the $(r+1,2)$ property, then $\F$ is pierced by $(\lfloor \frac{r}{2} \rfloor ^2 +\lfloor\frac{r}{2} \rfloor)p$ points when $r\ge 2$, and by $p$ points otherwise. The proofs use the topological KKM theorem.
\end{abstract}

\section{Introduction}

For  integers $p \ge q>0$, a family of sets $\F$ has the {\em $(p, q)$ property} if
among any $p$ members of $\F$ some $q$ have a nonempty intersection.
The {\em matching number} of $\F$ is the largest size a subfamily of pairwise disjoint sets in $\F$. Note that $\F$ has the $(p+1,2)$ property if and only if its matching number is at most $p$.
We say that  $\F$ is {\em pierced  by $k$ points} if there exists a set $C\subseteq \bigcup \F$, such that $|C|=k$ and  every $F\in \F $ contains a point from $C$.  
The {\em piercing number} of $\F$, denoted by $\tau(\F)$, is the smallest $k$ such that $\F$ is pierced by $k$ points.

The classical theorem of Helly \cite{helly} states that any family of compact
convex sets in $\R^d$ with the $(d+ 1, d+ 1)$-property is pierced by one point.
Hadwiger and Debrunner \cite{HD} considered the more general problem of studying
the piercing numbers of families  of compact, convex sets in $\R^d$ that
satisfy the $(p, q)$-property. They proved   that whenever $p\ge q\ge d+1$ and  $p(d-1)< d(q-1)$,  any family of compact, convex sets in $\R^d$ is pierced by $p-q+1$ points. They also conjectured  that for every  $p\ge q\ge d+1$ there exists a constant $c=c(p,q;d)$ such that any family of compact, convex sets in $\R^d$ with the $(p,q)$ property is pierced by $c$ points. This conjecture  was proved by Alon and Kleitman in 1992 \cite{AK}.

It is known that such a constant $c$ does not exist when $q \le d$. For example, when $d=q=2$, a family of pairwise intersecting line segments in general position 
in the plane has piercing number at least $|\F|/2$, which is not bounded by a constant. However, such a constant may exist if the family is further restricted. For example, Danzer \cite{Danzer} showed that a family of disks in the plane with the $(2,2)$ property has piercing number at most 4;
another example is 
a well-known conjecture of Wegner from 1965 \cite{wegner}, asserting that a family of axis-parallel rectangles in the plane with the $(p+1,2)$ property is pierced by $2p$ points (where the best current known upper bound, proven by Correa, Feuilloley,  P\'erez-Lantero, and Soto \cite{soto} is $\tau\le O(p)(\log\log p)^2$ ). 

The question of finding or improving bounds on the piercing numbers in general families of convex sets in $\R^d$ with the $(p,q)$ property for $q\ge d+1$, or in restricted families when $q$ may be smaller than $d+1$, has received great attention over the years, see e.g., \cite{Eckhoff, KST,  mcginnis} for general families, and \cite{CSZ, GZ, karasev, KT} for restricted families.

In this  note we give a new bound for the piercing numbers in general families $\F$ of compact convex sets in $\R^2$ with the $(p+1,2)$ property (namely, when $q=d=2$), where the bound is given in terms of the matching number of the family $\F'=\{S\cap S' \mid S,S'\in \F\}$ of pairwise intersections in $\F$. 

An {\em isolated set} is a set in $\F$ that does not intersect any other set in $\F$. 

\begin{theorem}\label{main}
 Let $p,r$ be positive integers. Let $\F$ be a finite family of compact convex sets in $\R^2$ with no isolated sets, and let $\F'$ be the family of its pairwise intersections. If $\F$ has the $(p+1,2)$ property and $\F'$ has the $(r+1,2)$ property then $\tau(\F)\le (\lfloor \frac{r}{2} \rfloor ^2 +\lfloor\frac{r}{2} \rfloor)p$ when $r\ge 2$, and $\tau(\F)\le p$ otherwise.
\end{theorem}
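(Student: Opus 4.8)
The plan is to derive Theorem~\ref{main} from the line-piercing theorem. The easy first reduction is to apply that theorem to $\F'$: since $\F'$ has the $(r+1,2)$ property, it is pierced by $N:=\lfloor r/2\rfloor+1$ lines $\ell_1,\dots,\ell_N$, and $N=1$ exactly when $r=1$. Because $\F$ has no isolated sets, every $S\in\F$ contains a nonempty member $S\cap S'$ of $\F'$, hence meets one of the $\ell_t$; so the same $N$ lines pierce $\F$. What must then be proved is that a family $\F$ with matching number at most $p$, no isolated sets, and whose pairwise-intersection family is pierced by $N$ lines, can be pierced by $N(N-1)p$ points when $N\ge 2$ and by $p$ points when $N=1$ (note $N(N-1)=\lfloor r/2\rfloor^2+\lfloor r/2\rfloor$).

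The case $N=1$ is short. A single line $\ell$ meets every member of $\F'$, so if $S,T\in\F$ have disjoint traces on $\ell$ then $S\cap T$ misses $\ell$; but $S\cap T\in\F'$ would have to meet $\ell$, so $S\cap T=\emptyset$. Hence pairwise-disjoint traces of $\F$ on $\ell$ come from pairwise-disjoint sets, so the interval family $\{S\cap\ell:S\in\F\}$ on $\ell\cong\R$ has matching number at most $p$, and by the one-dimensional Helly theorem it is pierced by $p$ points, which pierce $\F$.

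For $N\ge 2$ the idea is to assign to each $S\in\F$ a responsible line: pick a partner $S'$ with $S\cap S'\neq\emptyset$ and a line $\ell_{t(S)}$ piercing $S\cap S'$, and set $\F_t=\{S:t(S)=t\}$, so that $\F=\bigsqcup_t\F_t$ and every member of $\F_t$ meets $\ell_t$. It then suffices to pierce $\F_t$ by $(N-1)p$ points on $\ell_t$, i.e. to bound the matching number of the trace intervals $\{S\cap\ell_t:S\in\F_t\}$ by $(N-1)p$; summing over $t$ gives $N(N-1)p$. The structural input is that if $S_1,\dots,S_k\in\F$ have pairwise-disjoint traces on $\ell_t$, then for $a\neq b$ the set $S_a\cap S_b$ misses $\ell_t$, hence (as a member of $\F'$) is pierced by one of the other $N-1$ lines, and moreover $S_a\cap S_b\neq\emptyset$ forces the traces of $S_a$ and $S_b$ on that line to meet; consequently the intersection graph of $S_1,\dots,S_k$ equals the union of the $N-1$ interval graphs coming from the traces on $\ell_u$, $u\neq t$. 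Combining this with the hypothesis that $S_1,\dots,S_k$ contain no $p+1$ pairwise-disjoint sets is what should yield $k\le(N-1)p$.

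The main obstacle is precisely this last step. A direct counting argument fails: many of the $S_a$ can share a common point lying off $\ell_t$, so they have disjoint traces on $\ell_t$ yet are pierced by one point, which means the matching-number bound must be extracted together with a device that handles such clusters — for instance peeling off, line by line, the sets whose witnessing pairwise intersection is captured by a particular $\ell_u$ (and controlling the sets that thereby become isolated, of which there are at most $p$), or splitting $\F_t$ by the two closed half-planes bounded by $\ell_t$ so that each resulting intersection graph is governed by a single side. Carrying this accounting out so that the total stays at $N(N-1)p$, and in particular managing the interaction between $\ell_t$ and the remaining $N-1$ lines, is where I expect the real difficulty to lie, and it is plausibly here — rather than in the plain line-piercing theorem — that the colorful version of that theorem is needed, applied inductively in the number of lines.
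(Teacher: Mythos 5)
Your first reduction is exactly the paper's: apply Theorem~\ref{main2} to $\F'$ to obtain $N=\lfloor r/2\rfloor+1$ lines piercing every nonempty pairwise intersection, and your $N=1$ case (project onto the single line, observe that disjoint traces force disjoint sets, and invoke the Gallai-type Theorem~\ref{gallai}) is correct and matches the paper's $k=0$ case. But for $N\ge 2$ there is a genuine gap, and you have located it yourself: the per-line decomposition into classes $\F_t$ with a bound of $(N-1)p$ on the matching number of the traces of $\F_t$ on $\ell_t$ is not established, and nothing in your sketch (peeling, half-plane splitting, or an inductive use of the colorful line-piercing theorem) is shown to close it. The speculation that the colorful Theorem~\ref{main3} is the missing tool points in the wrong direction; it plays no role here.

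The missing idea is to stop working line by line and instead treat the union $L=\ell_1\cup\dots\cup\ell_N$ as $N$ disjoint copies of $\R$, assigning to each $S\in\F$ the single object $\bar S=S\cap L$, which is a (separated) $N$-interval since each $S\cap\ell_t$ is an interval by convexity. The structural fact you already proved --- that $S\cap T\neq\emptyset$ forces the traces of $S$ and $T$ on some common line to intersect --- says precisely that the family $\bar\F=\{\bar S\}$ has the $(p+1,2)$ property as a family of $N$-intervals. One then invokes the Tardos--Kaiser theorem (Theorem~\ref{dintervals}): a family of $d$-intervals with the $(p+1,2)$ property is pierced by $(d^2-d)p$ points. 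With $d=N=k+1$ this gives exactly $N(N-1)p=(\lfloor r/2\rfloor^2+\lfloor r/2\rfloor)p$ points, which pierce $\F$ because no set is isolated, so every $\bar S$ is nonempty. This theorem is a substantial topological result in its own right, so the accounting you were attempting to carry out by hand is not expected to go through elementarily; citing it is the intended (and, in the paper, the actual) way to finish.
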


Note that the condition that $\F$ has no isolated sets does not pose a significant restriction on the families $\F$ we may consider in the theorem, as any isolated set would add exactly 1 to both the matching number and  the piercing number of $\F$.

When $p=r=1$, Theorem \ref{main} implies Helly's theorem in the plane, as the condition that both $\F$ and $\F'$ have the $(2,2)$ property is equivalent to the condition that  $\F$ has the $(3,3)$ property, by Radon's theorem \cite{radon}. 

Moreover, if $p=1$ and $r\ge 2$, then the conditions $\F$ has the $(p+1,2)$ property and $\F'$ has the $(r+1,2)$ property imply that $\F$ has the $(t,3)$ property for some $t$ satisfying $r+1 \le {t \choose 2}$. Thus, by the Alon-Kleitman theorem, there exists some constant $c=c(t,3;2)$ such that $\tau(\F)\le c$. The current best known bound on this constant,  proved in \cite{KST},  is $c(t,3;2) \le O(t^4)$.
Theorem \ref{main} gives a specific bound rather than an order estimation,  under the stronger condition that $\F'$ has the $(r+1,2)$ property.

Similarly, for any $p,r\ge 2$, the conditions $\F$ has the $(p+1,2)$ property and $\F'$ has the $(r+1,2)$ property  imply that $\F$ has the $(t,4)$ property, for some $t\le 2r+p+1$. Indeed, let $\T$ be a subfamily of $\F$ of size $2r+p+1$. At each step pick two sets that intersect and remove them, until less than $p+1$ sets remain. We get $r+1$ intersecting pairs, and by the $(r+1,2)$ property of $\F'$ some two of them intersect. 
The bound proven in \cite{KST} then implies $\tau(\F)\le O((2r+p)^3)$. Our bound, under the stronger conditions of Theorem \ref{main}, provides a  specific bound rather than an order estimation, and an improvement in order when $p>>r$ or $r>>p$.

\medskip

The proof of  Theorem \ref{main} follows from a bound on the line-piercing numbers in families of convex sets in the plane. We say that a family $\F$ of sets in $\R^d$ is {\em pierced by $k$ lines}, if there exist $k$ lines in $\R^d$ whose union intersects every set in $\F$. The {\em line-piercing number} of $\F$ is the smallest $k$ such that $\F$ is pierced by $k$ lines. We say that $\F$ has {\em property $T(r)$} if every $r$ or fewer sets in $\F$ are pierced by a line.

The problem of bounding the line-piercing numbers of families of convex sets in the plane with the $T(r)$ property has been studied since the 1940's. In 1969 Eckhoff \cite{Eckhoff1} proved that if a family of compact convex sets  has the $T(k)$ property, for $k\ge 4$, then it is pierced by two lines. 
In 1993 
Eckhoff \cite{Eckhoff3}  proved that a family of compact convex sets  with the $T(3)$ property is pierced by 4 lines, and conjectured that  this bound can be improved to 3. This conjecture was proved in \cite{MZ}. 

If $\F$ is a finite family of compact convex sets in $\R^2$ with the $(p+1,2)$ property, then $\F$ is pierced by $p$ lines; this follows from projecting $\F$ onto a line and using a theorem of Gallai (Theorem \ref{gallai} below), asserting that the matching and piercing numbers in families of intervals in $\R$ are equal.  
Here we obtain a tight bound on the line-piercing numbers of such families: 

\begin{theorem}\label{main2}
If $\F$ is a finite family of compact convex sets in $\R^2$ with the $(p+1,2)$ property then it is pierced by $\lfloor \frac{p}{2} \rfloor+1$ lines. 
\end{theorem}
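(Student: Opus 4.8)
The plan is to prove Theorem~\ref{main2} by induction on $p$, using the topological KKM theorem to extract a single "good" line that hits many sets of $\F$ while reducing the matching number by a controlled amount. The base cases $p=1$ and $p=2$ say that a pairwise-intersecting family is pierced by one line (this is Eckhoff's result, or can be extracted directly: a pairwise-intersecting family of convex sets in the plane that is not pierced by a line would contain a "Radon-type" obstruction, and in fact $T(1)$ is exactly the pairwise-intersecting case, so one line suffices). For the inductive step, with $\lfloor \frac{p}{2}\rfloor+1 = \lfloor\frac{p-2}{2}\rfloor+2$, it suffices to find one line $\ell$ such that the subfamily $\F_0$ of sets \emph{missed} by $\ell$ has matching number at most $p-2$; then by induction $\F_0$ is pierced by $\lfloor\frac{p-2}{2}\rfloor+1$ lines, and together with $\ell$ we get $\lfloor\frac{p}{2}\rfloor+1$ lines for $\F$.

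The key step is producing that line. Here is where the KKM theorem enters. Consider a largest matching $M=\{F_1,\dots,F_p\}$ in $\F$ (pairwise disjoint); every set of $\F$ meets at least one $F_i$, since otherwise it could be added to $M$. I would set up a partition-of-unity / covering argument on a simplex whose vertices correspond to the $F_i$ (or to directions/pencils of lines), defining closed sets $A_i$ so that a KKM-type intersection point yields a line $\ell$ simultaneously transversal to a prescribed collection of the $F_i$'s and arranged so that the sets avoiding $\ell$ form a subfamily whose own largest matching has dropped by $2$. Concretely, one wants $\ell$ to cross at least two of the matching members $F_i, F_j$ and, more importantly, to stab every set that would be needed to extend a matching of $\F_0$ back up to size $p-1$; the topological input guarantees the simultaneous transversal exists even though no two of the $F_i$ intersect. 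The projection-to-a-line trick with Gallai's theorem (Theorem~\ref{gallai}) gives the weak bound $p$ and will likely serve as a sub-step: after fixing a direction, Gallai lets us piece together a small number of parallel lines, and the saving of a factor $2$ comes from choosing the direction via KKM so that each chosen line can be "charged" to two disjoint sets rather than one.

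The main obstacle I anticipate is the precise formulation of the closed cover on the simplex so that the KKM intersection point decodes into a line with the exact combinatorial property "matching number of the missed subfamily $\le p-2$," rather than merely "$\le p-1$." Getting the full factor-$2$ improvement (and the correct $+1$) almost certainly requires handling two matching members at once per line, which means the cover must encode \emph{pairs} of indices and one must verify the KKM hypothesis (that the union of $A_i$ over any face equals that face) in this pair-encoding — this compatibility check is the delicate point. I would also need to verify tightness separately by exhibiting, for each $p$, a family with the $(p+1,2)$ property that genuinely requires $\lfloor\frac{p}{2}\rfloor+1$ lines (e.g.\ a configuration built from $\lfloor p/2\rfloor+1$ "clusters" of segments in general position, placed so no line can serve two clusters), which is routine but needed to justify the word "tight."
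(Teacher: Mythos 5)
Your proposal is a plan rather than a proof: the crucial object (the KKM cover on the simplex that ``decodes'' into the desired line) is never constructed, and you yourself flag this as the anticipated obstacle. More seriously, the inductive reduction you propose --- find a single line $\ell$ so that the subfamily of sets missed by $\ell$ has matching number at most $p-2$ --- is not just unproved but false in general. The paper's own tightness example defeats it: let $\F$ be the $2p+1$ edges of a regular $(2p+1)$-gon, which has matching number exactly $p$. Any line meets at most $4$ edges, so the missed subfamily contains the edges of $C_{2p+1}$ minus at most $4$ edges, i.e.\ a disjoint union of at most two paths with at least $2p-3$ edges in total; such a union always contains a matching of size $p-1$. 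So no single line drops the matching number by $2$, and the ``drop by $1$'' version that is actually available loses the theorem for odd $p$ (already for $p=3$: the heptagon needs only $2$ lines, but your induction would charge $1 + (\lfloor 2/2\rfloor +1)=3$). The factor-of-two saving cannot be harvested one line at a time.

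The paper's proof avoids this by choosing all $k+1$ lines \emph{simultaneously}, where $k=\lfloor p/2\rfloor$: a point $x\in\Delta_{2k+1}$ encodes $2k+2$ points on a circle containing all the sets, paired up into $k+1$ chords, and the complement of the chords inside the disk is split into $2k+2$ pairwise disjoint open regions $R_i(x)$ with $R_i(x)=\emptyset$ when $x_i=0$. Setting $A_i=\{x : \text{some }S\in\F\text{ lies in }R_i(x)\}$, one checks that if the $A_i$ covered the simplex they would form a KKM cover, and a common point of all $A_i$ would produce $2k+2\ge p+1$ pairwise disjoint members of $\F$, contradicting the $(p+1,2)$ property; hence some $x$ lies in no $A_i$ and the corresponding $k+1$ chords pierce $\F$. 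If you want to salvage your approach, this is the construction you would need to supply, but note it is a one-shot global argument, not an induction. (Your base case $p=1$ is fine --- project onto a line and apply Helly in dimension one --- and tightness is indeed the $(2p+1)$-gon, but tightness is not needed to prove the stated theorem.)
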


The bound in Theorem \ref{main2} is tight by the following example. Let $\F$ be the family of edges of a regular $(2p+1)$-gon. Then $\F$ has the $(p+1)$-property, and since every line in $\R^2$ intersects at most 4 elements of $\F$, at least $\lceil\frac{2p+1}{4}\rceil = \lfloor \frac{p}{2} \rfloor+1$ lines are needed to pierce $\F$.   
\medskip

We shall also prove the following colorful version of Theorem \ref{main2}: 
\begin{theorem}\label{main3}
Suppose $\F_1,\dots,\F_{2k}$ are finite families of compact convex sets in $\R^2$, such that for any choice of sets $F_i \in \F_i, ~i\in [2k]$, some two sets among $F_1,\dots, F_{2k}$ intersect.   Then there exists $i\in [2k]$ such that $\F_i$ is pierced by $k$ lines. 
\end{theorem}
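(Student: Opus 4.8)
The plan is to derive the colorful statement from the topological KKM theorem, mirroring the proof of Theorem \ref{main2} (of which this is the colorful strengthening; setting $\F_1=\dots=\F_{2k}=\F$ and $p=2k-1$ recovers it, since the $(2k,2)$ property gives a pierced family by $k$ lines). The first step is to set up the right simplex of ``directions and positions'' for lines: a line in $\R^2$ is determined by an angle $\theta\in[0,\pi)$ together with a signed offset, and for a compact convex set $F$, the set of lines of a fixed direction $\theta$ that meet $F$ is a closed interval of offsets. So to each family $\F_i$ we can try to associate, over a suitable parameter space, the configurations of $k$ parallel lines that fail to pierce $\F_i$; the union of the ``bad'' regions over $i\in[2k]$ should be shown to not cover the parameter space, using the colorful intersection hypothesis to verify the KKM covering condition.

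More concretely, I would parametrize $k$-tuples of parallel lines by a point $\sigma$ in the $(k-1)$-dimensional simplex of ``gap patterns'' between consecutive parallel lines (after projecting $\R^2$ in direction $\theta$ onto a segment and normalizing, as in the Gallai-type argument already invoked in the paper for the non-colorful $p$-line bound), and also carry the angle $\theta$. For fixed $\theta$, projecting each $F\in\F_i$ to an interval on the line perpendicular to $\theta$, the question of whether $k$ parallel lines in direction $\theta$ pierce $\F_i$ becomes a question about piercing a family of intervals by $k$ points, governed by Gallai's theorem (Theorem \ref{gallai}): $\F_i$ is pierced by $k$ lines in direction $\theta$ iff the projected interval family has matching number $\le k$. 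I would then define, for each color $i$, a closed set $A_i$ in the parameter space (simplex $\times$ circle of directions, or a single simplex after a clever encoding) consisting of those parameters where $\F_i$ is \emph{not} pierced; the goal is to show $\bigcap_i A_i=\emptyset$, equivalently the complements form an open cover, and the KKM-type condition on faces is guaranteed precisely by the colorful hypothesis — on the face indexed by $S\subseteq[2k]$ one picks witnesses $F_i\in\F_i$ for $i\in S$, and the hypothesis that some two of $F_1,\dots,F_{2k}$ (hence some two among a carefully chosen sub-collection) intersect forces the relevant covering relation.

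The main obstacle I anticipate is getting the dimension count and the face-labeling to line up: we have $2k$ colors but only a $(k-1)$-dimensional simplex's worth of parallel-line configurations for a fixed direction, so a naive KKM setup has the wrong number of parts. The resolution — and the crux of the argument — should be to exploit that each unpiercing obstruction ``costs two sets'' (a pair that two consecutive parallel lines fail to separate, in the spirit of the matching-number-equals-piercing-number phenomenon for intervals), so that $k$ lines failing to pierce yields $k$ disjoint pairs, i.e. $2k$ sets, matching the $2k$ colors; encoding this pairing into the KKM covering so that the topological KKM theorem (applied on the product of the simplex with the space of directions, or on a subdivided simplex of the right dimension) yields a common point in all $2k$ bad sets, contradicting the colorful hypothesis, is where the real work lies. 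Once the covering condition is verified on every face via the colorful intersection property, the topological KKM theorem delivers the contradiction, and hence some $\F_i$ must be pierced by $k$ lines.
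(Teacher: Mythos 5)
Your proposal is a plan rather than a proof, and the step you yourself flag as ``where the real work lies'' is precisely the step that is missing; as written, the sketch does not close. Concretely: (i) your configuration space ($k$ parallel lines in a direction $\theta$, encoded by a $(k-1)$-simplex of gaps times a circle of directions) produces only $k+1$ complementary regions per configuration, so there is no natural way to index a KKM cover by $2k$ parts, and the ``each obstruction costs two sets'' pairing you invoke to fix the count is never constructed; (ii) you define one set $A_i$ per \emph{color} and hope to show $\bigcap_i A_i=\emptyset$ via a KKM condition, but the KKM theorem concludes that an intersection is \emph{nonempty}, so the logical shape of your argument is inverted -- you need to assume each family's ``bad'' sets cover the parameter space and extract a contradiction from a common point; and (iii) you never invoke the \emph{colorful} KKM theorem (Gale), which is indispensable here: with $2k$ distinct families one needs $2k$ separate KKM covers and a rainbow intersection point in order to produce $2k$ pairwise disjoint sets, one from each $\F_i$, contradicting the hypothesis.

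The paper's proof resolves exactly the dimension/region mismatch you identify, by a different geometric construction: place all sets in the open unit disk, let a point $x\in\Delta_{2k-1}$ mark $2k$ points on the boundary circle, and join the $i$-th to the $(i+k)$-th marked point by a chord, giving $k$ line segments (not parallel lines) whose complement in the disk is carved into $2k$ pairwise disjoint open regions $R_1(x),\dots,R_{2k}(x)$, one per coordinate of the simplex, with $R_i(x)=\emptyset$ when $x_i=0$. For each family $\F_j$ one sets $A^j_i=\{x: \text{some }S\in\F_j\text{ lies in }R_i(x)\}$; if every $\bigcup_i A^j_i$ covered $\Delta_{2k-1}$, each $(A^j_1,\dots,A^j_{2k})$ would be a KKM cover, and Gale's colorful KKM theorem would yield a permutation $\pi$ and a point $x$ with $F_i\in\F_{\pi(i)}$ contained in the disjoint regions $R_i(x)$ -- a choice of one set per family that is pairwise disjoint, contradicting the hypothesis. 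Hence some family $\F_j$ has a point $x$ outside all $A^j_i$, and every member of $\F_j$ meets one of the $k$ chords. Your high-level instinct (negate, build a cover, apply a KKM-type theorem, contradict the colorful hypothesis) matches the paper, but without the chord construction giving $2k$ regions and without the colorful KKM theorem, the argument cannot be completed along the lines you describe.
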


The proof of Theorem \ref{main2} uses the topological KKM theorem, due to Knaster, Kuratowski and Mazurkiewicz \cite{kkm}. Let $$\Delta_{k} =\Big\{ (x_1,\dots,x_{k+1}) \mid \sum_{i=1}^{k+1} x_i=0, x_i\ge 0 ~\text{ for all 
 } i\in [k+1]\Big\}$$  be the standard $k$-dimensional simplex in $\R^{k+1}$. For a face $\sigma$ of $\Delta_{k}$, we write $i\in \sigma$ if $\sigma$ contains the $i$-th vertex of $\Delta_k$, namely the point $(x_1,\dots,x_{k+1})$ with $x_i=1$ and $x_j=0 $ for $i\neq j$. Note that if $x_i>0$ for some point $(x_1,\dots,x_{k+1}) \in \sigma$ then $i\in \sigma$.

\begin{theorem}[The KKM theorem]\label{kkm}
If  open subsets $A_1,\dots,A_{k+1}$ of the $k$-dimensional simplex $\Delta_{k}$ 
satisfy $\sigma \subseteq \bigcup_{i \in \sigma} A_i$ for 
every face $\sigma$ of $\Delta_{k}$, then $\bigcap_{i=1}^{k+1} A_i \neq \emptyset$. 
\end{theorem}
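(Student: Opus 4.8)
The plan is to convert the open covering condition into a continuous self-map of $\Delta_k$ that is forced to be surjective, and then read off a common point as the preimage of the barycenter (this is how one derives the open version from the classical closed KKM theorem, itself a standard corollary of Sperner's lemma). Suppose the hypotheses hold. Applying the face condition to $\sigma=\Delta_k$ shows that $A_1,\dots,A_{k+1}$ cover the compact metric space $\Delta_k$, and applying it to the vertices shows each $A_i\neq\emptyset$. First I would fix a partition of unity $\{\psi_i\}$ with the \emph{sharp} property $\psi_i(x)>0\iff x\in A_i$: take $d_i(x)=\dist(x,\Delta_k\setminus A_i)$ (with $d_i\equiv1$ when $A_i=\Delta_k$), note $\sum_i d_i$ is continuous and strictly positive because $\{A_i\}$ covers, and set $\psi_i=d_i/\sum_j d_j$.

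Next I would ``bend'' this partition of unity so that it respects the face structure of $\Delta_k$. Writing $x=(x_1,\dots,x_{k+1})$ for barycentric coordinates, put $\eta_i(x)=\psi_i(x)\,x_i$ and $\varphi_i(x)=\eta_i(x)/\sum_j\eta_j(x)$. The denominator never vanishes: if $\sum_j\eta_j(x)=0$, then no index $j$ with $x_j>0$ has $x\in A_j$; but if $\sigma$ is the minimal face containing $x$ (so $j\in\sigma$ exactly when $x_j>0$), the face condition supplies some $i\in\sigma$ with $x\in A_i$, and that $i$ has $x_i>0$ --- a contradiction. Hence each $\varphi_i$ is continuous, $\sum_i\varphi_i\equiv1$, one has $\{\varphi_i>0\}=A_i\cap\{x_i>0\}\subseteq A_i$, and $\varphi_i$ vanishes on the face $\tau_i=\{x_i=0\}$ opposite the $i$-th vertex.

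Now set $f:\Delta_k\to\Delta_k$, $f(x)=\sum_i\varphi_i(x)\,v_i$, where $v_i$ denotes the $i$-th vertex. This $f$ is continuous and carries every face into itself: for $x$ in a face $\sigma$ and any $j\notin\sigma$ we have $\sigma\subseteq\tau_j$, hence $\varphi_j(x)=0$, so $f(x)=\sum_{i\in\sigma}\varphi_i(x)\,v_i\in\sigma$. A continuous self-map of a simplex that sends each face into itself is surjective --- its restriction to $\partial\Delta_k$ is homotopic to the identity through self-maps of $\partial\Delta_k$ (the straight-line homotopy stays in each face by convexity), so if it missed an interior point, radial projection from that point would exhibit $\mathrm{id}_{\partial\Delta_k}$ as nullhomotopic in $\partial\Delta_k\cong S^{k-1}$, which is false. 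In particular the barycenter $b$ has a preimage $y$, whence $\varphi_i(y)=\tfrac1{k+1}>0$, and so $y\in\{\varphi_i>0\}\subseteq A_i$, for every $i$; that is, $y\in\bigcap_{i=1}^{k+1}A_i$.

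The step I expect to carry the weight is exactly the passage from the covering condition to an honest common point: the naive attempt --- pick $x_i\in A_i$ close to a common ``Sperner limit point'' --- fails because a limit of points of an open set can escape it, so one must instead encode the covering in a globally defined continuous object (here the map $f$, or alternatively a closed shrinking $C_i\subseteq A_i$ satisfying the same face condition, to which the closed KKM theorem applies). In either route the face condition is doing one concrete job: keeping the relevant normalizing denominator $\sum_j\eta_j$ strictly positive everywhere, so that the construction is well defined on all of $\Delta_k$. Verifying that positivity, together with the face-by-face vanishing $\varphi_i|_{\tau_i}\equiv0$, is the only place the hypothesis is really used.
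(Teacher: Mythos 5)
The paper does not prove this statement---it is the classical Knaster--Kuratowski--Mazurkiewicz theorem, quoted with a citation to \cite{kkm} and used as a black box---so there is no internal proof to compare against. Your argument is correct and self-contained. The key constructions all check out: the distance-based weights satisfy $d_i(x)>0\iff x\in A_i$ precisely because each $A_i$ is open (so $\Delta_k\setminus A_i$ is closed); the normalizing sum $\sum_j\eta_j$ is strictly positive because the face condition applied to the \emph{minimal} face containing $x$ produces an index $i$ with both $x_i>0$ and $x\in A_i$; the map $f$ preserves every face since $\varphi_j$ vanishes where $x_j=0$; and the surjectivity of such a face-preserving self-map (via the straight-line homotopy of $f|_{\partial\Delta_k}$ to the identity inside $\partial\Delta_k$, followed by the non-nullhomotopy of $\mathrm{id}_{S^{k-1}}$) correctly delivers a preimage of the barycenter, which then lies in every $A_i$. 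The one case your topological step does not literally cover is $k=0$, where $\partial\Delta_0=\emptyset$; it is trivial there since the face condition forces $A_1=\Delta_0$, but it deserves a sentence. Your route differs from the textbook proof (Sperner's lemma plus a limiting argument, with the open-set version usually deduced by shrinking to a closed KKM cover); what your approach buys is exactly what you identify---it sidesteps the failure of limit arguments for open sets by encoding the cover in a globally defined continuous map---at the cost of invoking the non-contractibility of $S^{k-1}$, which is of the same depth as Brouwer's theorem and hence as Sperner's lemma.
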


A family  $(A_1,\dots, A_{k+1})$ of   subsets of $\Delta_k$ satisfying the conditions of Theorem \ref{kkm} is called a {\em KKM cover} of $\Delta_k$. Let $S_n$ be the group of permutations of the elements in $[n]=\{1,2,\dots,n\}$. 
The proof of Theorem \ref{main3}  will use a colorful generalization of the KKM theorem, due to Gale \cite{gale}. 

\begin{theorem}[The colorful KKM theorem]\label{colorfulkkm}
If  $(A^i_1,\dots,A^i_{k+1}), i\in [k+1]$, are $k+1$ KKM covers of $\Delta_{k}$,  
 then there exists a permutation $\pi \in S_{k+1}$ such that $\bigcap_{i=1}^{k+1} A^{\pi(i)}_i \neq \emptyset$. 
\end{theorem}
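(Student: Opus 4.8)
The plan is to deduce Theorem~\ref{colorfulkkm} from a \emph{colorful} combinatorial Sperner-type lemma (due to Bapat), via the classical triangulation-and-limit argument that derives the KKM theorem from Sperner's lemma. Two preliminary reductions clear the way. First, since taking $\sigma=\Delta_k$ in the KKM condition shows that each family $(A^i_1,\dots,A^i_{k+1})$ is an open cover of $\Delta_k$, I would replace the open covers by closed ones: for small $\e>0$ set $A^i_j(\e)=\{x\in\Delta_k:\dist(x,\Delta_k\setminus A^i_j)>\e\}$ and $C^i_j=\overline{A^i_j(\e)}$. Each $C^i_j$ is closed and contained in $A^i_j$. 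The only point requiring care is that the face condition survives: for a fixed face $\sigma$ and color $i$, the sets $\{A^i_j\}_{j\in\sigma}$ form an open cover of the compact set $\sigma$, so the function $x\mapsto \max_{j\in\sigma}\dist(x,\Delta_k\setminus A^i_j)$ is continuous and strictly positive on $\sigma$, hence bounded below by some $\delta_{\sigma,i}>0$. Taking $\e$ smaller than the minimum of the finitely many $\delta_{\sigma,i}$ guarantees $\sigma\subseteq\bigcup_{j\in\sigma}C^i_j$ for every face $\sigma$, so each $(C^i_1,\dots,C^i_{k+1})$ is a closed KKM cover and it suffices to prove the theorem for closed covers.

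For the closed case, I would fix a sequence of triangulations $T_n$ of $\Delta_k$ with mesh tending to $0$. For each color $i\in[k+1]$ define a labeling $\ell^i$ on the vertices of $T_n$ as follows: given a vertex $v$ with minimal carrying face $\sigma_v$, the KKM condition gives $v\in\sigma_v\subseteq\bigcup_{j\in\sigma_v}C^i_j$, so I pick some $j\in\sigma_v$ with $v\in C^i_j$ and set $\ell^i(v)=j$. By construction each $\ell^i$ is a Sperner labeling, in the sense that its value always lies in the carrier of the vertex. I would then invoke Bapat's colorful Sperner lemma: for the $k+1$ Sperner labelings $\ell^1,\dots,\ell^{k+1}$ there is a cell $\{v_1,\dots,v_{k+1}\}$ of $T_n$ and a bijection assigning the labelings to the vertices so that the resulting labels exhaust $[k+1]$. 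Unwinding this assignment yields a permutation $\pi_n\in S_{k+1}$ together with a vertex realizing membership in $C^{\pi_n(i)}_i$ for each $i\in[k+1]$, all lying in a single cell of diameter at most the mesh of $T_n$.

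Finally I would pass to the limit. There are only finitely many permutations, so along a subsequence $\pi_n$ is a constant $\pi$, and since the diameters of the selected cells tend to $0$, the chosen vertices all converge to a common point $x\in\Delta_k$. For each $i$ we then have points of $C^{\pi(i)}_i$ converging to $x$; because $C^{\pi(i)}_i$ is \emph{closed}, $x\in C^{\pi(i)}_i$. Hence $x\in\bigcap_{i=1}^{k+1}C^{\pi(i)}_i\subseteq\bigcap_{i=1}^{k+1}A^{\pi(i)}_i$, proving the theorem.

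The main obstacle is the colorful Sperner lemma itself, which is the genuinely new combinatorial content beyond the ordinary KKM theorem: ordinary Sperner produces one fully-labeled cell from a single labeling, whereas here one must extract a single cell that is simultaneously rainbow across $k+1$ different labelings through a permutation. I expect its proof to require a parity/door-pivoting argument on the triangulation, as in Sperner's lemma, adapted to the permutation structure, and getting the boundary (carrier) conditions to interact correctly with the colors is where the real care lies. The open-to-closed reduction, by contrast, is routine once the strict positivity of the covering margins on each face is observed.
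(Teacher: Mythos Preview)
The paper does not prove Theorem~\ref{colorfulkkm}; it is quoted as a known result due to Gale and then used as a black box in the proof of Theorem~\ref{main3}. So there is no ``paper's own proof'' to compare against.

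Your proposal is nonetheless a correct and standard route to the statement. The open-to-closed reduction is sound: for each face $\sigma$ and each color $i$ the function $x\mapsto\max_{j\in\sigma}\dist(x,\Delta_k\setminus A^i_j)$ is continuous and strictly positive on the compact $\sigma$, so a single $\e$ smaller than the finitely many minima works, and the closure $\overline{A^i_j(\e)}$ stays inside $A^i_j$ because $\dist(\cdot,\Delta_k\setminus A^i_j)\geq\e>0$ on it. The triangulation-and-labeling step produces, for each color, a Sperner labeling in the usual sense, and Bapat's colorful Sperner lemma then supplies a cell together with a bijection between labelings and vertices whose induced labels exhaust $[k+1]$; unwinding this bijection exactly gives the permutation $\pi_n$ and vertices $v_i\in C^{\pi_n(i)}_i$ you describe. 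The limit argument is routine: pass to a subsequence with constant $\pi$, use compactness of $\Delta_k$ and mesh${}\to 0$ to get a common limit $x$, and closedness of the $C^{\pi(i)}_i$ to place $x$ in each of them.

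The only point worth flagging is that you are deferring all the real work to Bapat's lemma, which you cite but do not prove. That is entirely appropriate here, since the paper itself treats the colorful KKM theorem as an off-the-shelf tool; but be aware that Gale's original 1984 argument predates Bapat's 1989 combinatorial lemma and proceeds differently, so your route, while correct, is not the historical one attached to the citation in the paper.
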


The application of Theorems \ref{kkm} and \ref{colorfulkkm} in the proof of Theorems \ref{main2} and \ref{main3} is similar to their application in \cite{mcginnis, MZ}: in all cases one  uses a simplex of appropriate dimension to model the configuration space of some special sets of lines in the plane, and then defines a KKM cover corresponding to certain regions, obtained from the partition of the plane by the lines.  Those ideas  appeared first in \cite{mcginnis, MZ}.  The new ingredient in the proof of Theorems \ref{main2}  and \ref{main3} is the new way the regions are defined: here we define the regions inductively, and they are pairwise disjoint, which allows us to consider any number of distributed lines, and therefore to prove the theorem for all $p$. This shows the robustness of this KKM-based method, which was first introduced in \cite{mcginnis, MZ}: changing the way the regions are defined allows for more applications of the method. 

For the proof of  Theorem \ref{main} we will also use the following theorem of Gallai \cite{gallai}, which is a special case  of the theorem of Hadwiger and Debrunner mentioned above.

    \begin{theorem}\label{gallai} 
      If $\F$ a finite family of compact intervals in $\R$ with the \linebreak$(p+1,2)$ property, then $\tau(\F)\le p$.
        \end{theorem}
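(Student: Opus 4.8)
The plan is to establish the classical equality, due to Gallai, between the matching number $\nu(\F)$ and the piercing number $\tau(\F)$ for a finite family of compact intervals on the line. Since the $(p+1,2)$ property is exactly the assertion $\nu(\F)\le p$, and since trivially every piercing set must contain a distinct point of each interval in a pairwise disjoint subfamily (so $\tau(\F)\ge\nu(\F)$), it suffices to prove $\tau(\F)\le\nu(\F)$.

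To do this I would run a greedy ``leftmost right endpoint'' piercing procedure. Repeat the following until $\F$ is exhausted: among all intervals not yet pierced, choose one, say $I=[a,b]$, whose right endpoint $b$ is smallest; put $b$ into the piercing set $C$; then discard every interval (in particular $I$ itself) that contains $b$. The resulting finite set $C$ pierces every member of $\F$ by construction, so $\tau(\F)\le|C|$.

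The crux is to show $|C|\le\nu(\F)$, which I would do by exhibiting a pairwise disjoint subfamily of size $|C|$. Let $b_1,b_2,\dots$ be the chosen points in order of selection, and let $I_j=[a_j,b_j]$ be the interval selected at step $j$. I claim the $I_j$ are pairwise disjoint. Fix $j<j'$. The interval $I_{j'}$ is selected at step $j'$, hence it survived steps $1,\dots,j'-1$; in particular at step $j$ it was still present and was not discarded, so $b_j\notin I_{j'}$. Since $b_j$ was chosen as the minimum right endpoint among the intervals present at step $j$, and $I_{j'}$ was among them, we have $b_{j'}\ge b_j$. A compact interval $[a_{j'},b_{j'}]$ with $b_{j'}\ge b_j$ that does not contain $b_j$ must satisfy $a_{j'}>b_j$, so $I_{j'}$ lies strictly to the right of $b_j$, which is the right endpoint of $I_j$; hence $I_j\cap I_{j'}=\emptyset$. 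Thus $\{I_1,I_2,\dots\}$ is a matching of size $|C|$, giving $|C|\le\nu(\F)\le p$ and therefore $\tau(\F)\le p$.

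I do not anticipate a genuine obstacle here: the only point requiring care is the disjointness of the greedy intervals, where one must invoke compactness — a closed interval contains its right endpoint, so any interval whose right endpoint is at least $b$ but which misses $b$ must lie entirely to the right of $b$. (If one preferred, the same conclusion could instead be obtained by appeal to the perfectness of interval graphs, but the greedy argument is self-contained and elementary.)
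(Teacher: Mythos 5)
Your proof is correct. Note that the paper does not actually prove Theorem \ref{gallai}: it is quoted as a known result of Gallai (and as a special case of the Hadwiger--Debrunner theorem), so there is no in-paper argument to compare yours against. Your greedy ``leftmost right endpoint'' sweep is the standard elementary proof of this fact, and every step checks out: the chosen points pierce $\F$ by construction, and for $j<j'$ the interval $I_{j'}$ survives step $j$, hence misses $b_j$, while $b_{j'}\ge b_j$ by the minimality of $b_j$ among the intervals present at step $j$, forcing $a_{j'}>b_j$ and thus $I_j\cap I_{j'}=\emptyset$; this exhibits a matching of size $|C|\le p$, and compactness is used exactly where you say it is (each interval contains its right endpoint). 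The argument is complete and self-contained.
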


Finally, we will use a theorem bounding the piercing numbers in families of $d$-intervals. 
A {\em (separated) $d$-interval} is a union of at most $d$ compact (possibly empty) intervals, one  on each of $d$ copies of $\R$. The following was proved by Tardos \cite{tardos} for $d=2$ and by Kaiser \cite{kaiser} for all $d\ge 2$.
        
        \begin{theorem}\label{dintervals} 
        Let $d\ge 2$. If $\F$ a finite family of compact $d$-intervals with the $(p+1,2)$ property, then $\tau(\F) \le (d^2-d)p$.
        \end{theorem}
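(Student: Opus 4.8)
\noindent\emph{Proof strategy.} This is the theorem of Tardos ($d=2$) and of Kaiser (all $d$), and I expect any proof to be genuinely topological; the plan is to recast it in the KKM language used throughout this note. First normalize: by compactness and rescaling assume every member of $\F$ lies in $[0,1]_1\sqcup\cdots\sqcup[0,1]_d$, the disjoint union of $d$ copies of $[0,1]$, and that the matching number of $\F$ is exactly $p$. A transversal of the target size $(d^2-d)p=d(d-1)p$ will be produced by placing $(d-1)p$ points on \emph{each} copy $[0,1]_j$. Observe that the copy-by-copy use of Gallai's \tref{gallai} cannot work: two $d$-intervals that are disjoint on copy $j$ need not be disjoint as $d$-intervals, so the projection of $\F$ to copy $j$ may have matching number far larger than $p$. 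Thus the $d$ point sets must be selected jointly, and topology will furnish the coupling.

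Encode $(d-1)p$ ordered points $0\le s^j_1\le\cdots\le s^j_{(d-1)p}\le 1$ on copy $j$ by the tuple of their $(d-1)p+1$ gap lengths, i.e.\ as a point of $\Delta_{(d-1)p}$; a full placement is then a point of the polytope $P=\prod_{j=1}^{d}\Delta_{(d-1)p}$, to which a polytopal version of the KKM theorem applies (one can also reduce to \tref{kkm}, or use a colorful variant in the spirit of \tref{colorfulkkm}). A placement cuts copy $j$ into $(d-1)p+1$ half-open gaps $G^j_0,\dots,G^j_{(d-1)p}$. For $F=F_1\cup\cdots\cup F_d\in\F$ let $M_F\subseteq P$ be the set of placements that \emph{miss} $F$: for every $j$ with $F_j\neq\emptyset$ the compact interval $F_j$ lies strictly inside a single gap $G^j_{\ell_j(F)}$. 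Each $M_F$ is open. If some placement lies in no $M_F$ it is exactly the transversal we want, so assume $\{M_F\}_{F\in\F}$ covers $P$.

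Under this assumption the plan is to manufacture $p+1$ pairwise disjoint members of $\F$, contradicting the matching number. I would pass to a minimal subcover, record for each surviving $M_F$ the gap indices $(\ell_1(F),\dots,\ell_d(F))$ it uses, and from this data build a relabeled open cover $(A_i)$ of $P$ indexed by the vertices of $P$, arranged so that (i) $(A_i)$ satisfies the KKM boundary condition $\sigma\subseteq\bigcup_{i\in\sigma}A_i$, and (ii) two $d$-intervals whose data are attached to ``sufficiently different'' vertices are forced to be separated by a gap on every copy, hence disjoint. A point common to all $A_i$, provided by \tref{kkm}, then decodes into $p+1$ members of $\F$ that are pairwise disjoint.

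The crux --- and the step I expect to be hardest --- is the combinatorial bookkeeping in the last paragraph: designing the relabeling so that the budget of $(d-1)p$ points per copy is at once enough for the boundary condition and enough to force a matching of size $p+1$. The intuition is that each of the $p$ $d$-intervals of an optimal matching should absorb at most $d-1$ ``gap transitions'' on each of the $d$ copies, giving the count $d(d-1)p=(d^2-d)p$; converting this heuristic into an honest KKM cover is precisely where Tardos's and Kaiser's proofs concentrate their work (Kaiser reducing it to a topological lemma he proves with the Borsuk--Ulam theorem). I do not expect to improve their constant; the goal of this plan is only to re-obtain $(d^2-d)p$ inside the KKM framework used elsewhere here.
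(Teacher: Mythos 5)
First, note that the paper does not prove this statement at all: Theorem \ref{dintervals} is imported as a black box from Tardos \cite{tardos} (for $d=2$) and Kaiser \cite{kaiser} (for all $d\ge 2$), so there is no in-paper argument to compare yours against. Judged on its own terms, your write-up is a plan rather than a proof, and you say as much. The setup is sound and is the natural way to force the problem into the KKM framework of this note: parametrizing $(d-1)p$ ordered points on each copy of $[0,1]$ by their gap lengths gives the polytope $P=\prod_{j=1}^{d}\Delta_{(d-1)p}$, the ``miss'' sets $M_F$ are open, and a placement lying in no $M_F$ is a transversal of size $d\cdot(d-1)p=(d^2-d)p$. (This is essentially the route of the known KKM-style reproof of the Tardos--Kaiser theorem via a polytopal KKM theorem of Komiya type, so the strategy is viable in principle.)

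The gap is the entire second half, and it is where all the content lives. Two things are missing, and neither is routine. (i) You need to state which KKM-type theorem for the product of simplices you are invoking and what it actually delivers: for a polytope $P$ that is not a simplex, the polytopal KKM theorem does not give a point in \emph{all} of the sets $A_v$, only a point common to the $A_v$ for $v$ ranging over a vertex set spanning a suitable face, and the arithmetic of which vertex sets of $\Delta_{(d-1)p}^{\,d}$ can occur is precisely where the constant $d^2-d$ is decided. (ii) Even granting such a point, disjointness of the extracted members of $\F$ is not automatic, in sharp contrast with the proofs of Theorems \ref{main2} and \ref{main3}, where the regions $R_i(x)$ are pairwise disjoint subsets of the disk by construction; here two $d$-intervals sitting in different gaps on copy $1$ can still meet on copy $2$, so the implication ``attached to sufficiently different vertices $\Rightarrow$ separated by a gap on every copy'' is a property you must build into the relabeling, not observe afterwards. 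You explicitly defer exactly this step to Tardos and Kaiser; that is an honest assessment, but it means the proposal does not establish the theorem.
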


\section{Proofs of Theorems \ref{main2} and \ref{main3}}

\begin{proof}[Proof of Theorem \ref{main2}]
Since $\F$ is finite and the sets in it are compact, we may assume that all the sets in $\F$ are    contained in the open unit disk $D$.  Let $f(t)$ be a parameterization of $U=\partial D$ defined by $f(t)=(\textrm{cos}(2\pi t), \textrm{sin}(2\pi t))$. 

Let $k=\lfloor \frac{p}{2}\rfloor$. 
A point $x=(x_1,\dots,x_{2k+2})\in \Delta_{2k+1}$ corresponds to $2k+2$ points on $U$, given by $f_i(x)=f(\sum_{j=0}^i x_{j})$ for $0\leq i\leq 2k+1$, where $x_0=0$.  
For $i=0,\dots, k$, let $\ell_i(x)=\ell_{i+k+1}(x)$ be the line segment connecting $f_i(x)$ and $f_{i+k+1}(x)$ (it may happen that $\ell_i(x)$ is a single point).
Let $L(x)=\bigcup_{i=0}^k \ell_i(x)$.
For $i\in [2k+2]$, let $P_i(x)$ be  the open region  bounded by the  $\ell_i(x)$,  $\ell_{i-1}(x)$  and the arc between $f_{i-1}(x)$ and $f_i(x)$. Further, for $i\in [2k+2]$, define recursively the regions $T_i(x)$ as follows: $T_1(x)=P_1(x)$, and $T_i(x) = P_i(x) \setminus \bigcup_{j=1}^{i-1} T_j(x)$ for all $2\le i \le 2k+2$. Finally, let $R_i(x)=T_i(x) \setminus L(x)$. Note that since $L(x)\cup (\bigcup_{i=1}^{2k+2} P_i(x)) = D-U$, we  have  $L(x)\cup (\bigcup_{i=1}^{2k+2} R_i(x)) = D-U$. Moreover, the regions $R_1(x),\dots, R_{2k+2}(x)$ are pairwise disjoint and open 
 (see Figure \ref{figure1}).  Notice that $R_i(x)=\emptyset$ when $x_{i}=0$, because in this case the arc between $f_{i-1}(x)$ and $f_i(x)$ is of length 0, and thus $P_i(x)=\emptyset$. 

\begin{figure}
     \centering
     \begin{subfigure}[b]{0.465\textwidth}
         \centering
         \includegraphics[width=\textwidth]{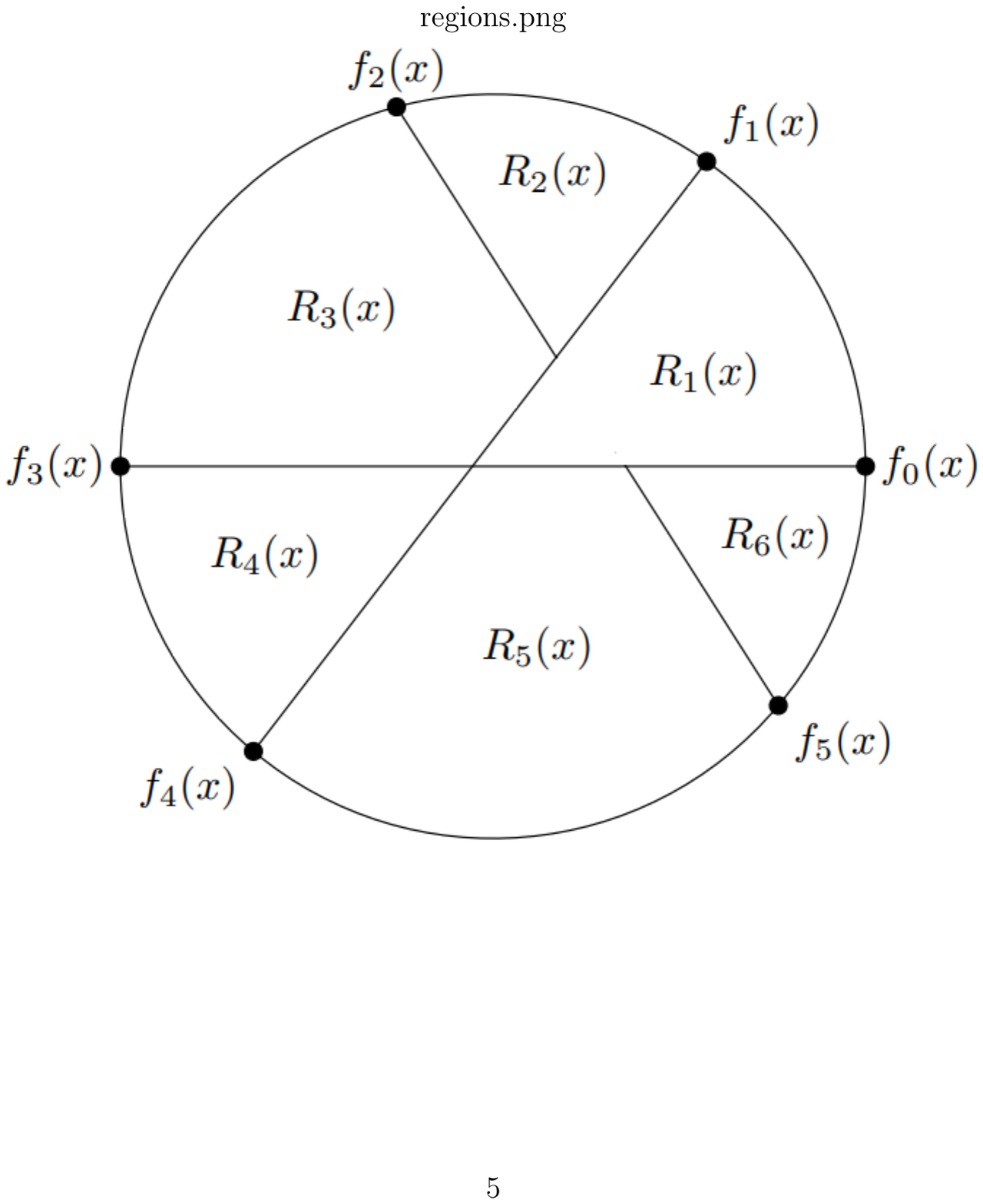}
         \caption{$p=4$ or $p=5$}
     \end{subfigure}
     \hfill
     \begin{subfigure}[b]{0.45\textwidth}
         \centering
         \includegraphics[width=\textwidth]{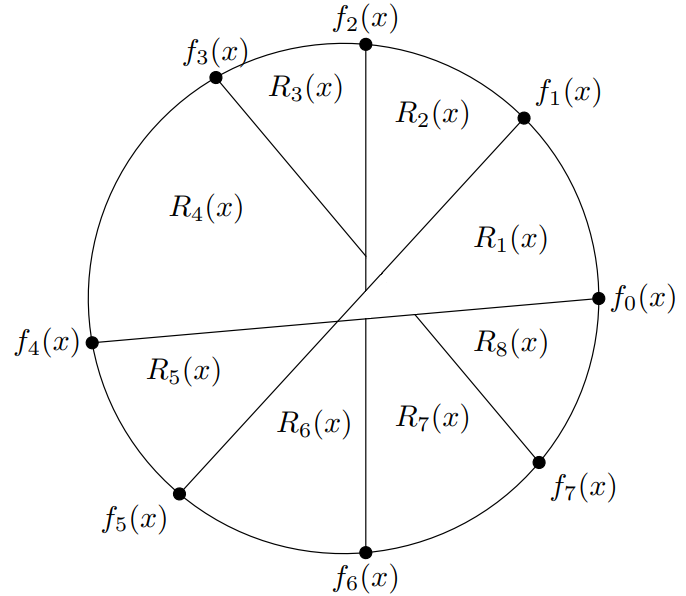}
         \caption{$p=6$ or $p=7$}
     \end{subfigure}
        \caption{The regions the region $R_i(x)$ (note that $R_i(x)$ does not contain points from the lines $\ell_j(x)$ for any $j$, and thus it is not necessarily connected. However, to make the picture clear, we depicted the regions $R_i(x)$ as connected regions).   }
        \label{figure1}
\end{figure}

For every $i\in [2k+2]$, define  sets $A_i \subseteq \Delta_{2k+1}$ as follows:  $x \in A_i$ if and only if there exists a set $S \in \F$ so that $S$ is contained in $R_i(x)$. 

Assume for contradiction that $\Delta_{2k+1} = \bigcup_{i=1}^{2k+2} A_i$. We claim that the family of sets $(A_1,\dots, A_{2k+2})$ forms a KKM cover of $\Delta_{2k+1}$. First, the sets $A_i$ are open because the sets in $\F$ are closed. Moreover, if $x \in \sigma $ for some face $\sigma$ of  $\Delta_{2k+1}$, then for any $i\notin \sigma$ we have $x_i=0$ and thus $R_i(x)=\emptyset$, so $R_i(x)$ cannot contain a set of $\F$. Therefore $x\notin A_i$ when $i\notin \sigma$. Since $\Delta_{2k+1} = \bigcup_{i=1}^{2k+2} A_i$ by our  assumption, it follows that $x\in A_i$ for some $i\in \sigma$, showing $\sigma \subseteq \bigcup_{i\in \sigma} A_i$.
Hence, by the KKM theorem, there exists $x \in \bigcap_{i=1}^{2k+2} A_i$, and thus for every $i\in [2k+2]$ there exists a set $F_i \in \F$ with $F_i \subset R_i(x)$. But then the sets $F_1,\dots,F_{2k+2}$ are pairwise disjoint, and since $2k+2 \ge p+1$, this is a  contradiction to the $(p+1,2)$ property of $\F$.
 
We conclude that there exists $x=(x_1,\dots,x_{2k+2}) \in \Delta_{2k+1} \setminus (\bigcup_{i=1}^{2k+2} A_i)$. Since $x\notin A_i$, there is no set in $\F$ lying in $R_i(x)$, for all $i\in [2k+2]$. But then  $L(x)\cup (\bigcup_{i=1}^{2k+2} R_i(x)) = D-U$ and the fact that all the sets in $\F$ lie in $D-U$  entail that every 
set in $\F$ intersects one of the $k+1$ lines $\ell_i(x)$, $0\le i \le k$. 
This completes the proof of the theorem.
\end{proof}

\begin{proof}[Proof of Theorem \ref{main3}]
The proof follows the lines of the previous proof, with some modifications. 
Again,  we may assume that all the sets in $\bigcup_{j=1}^{2k}\F_j$ are  contained in the open unit disk $D$.  Let $f(t)=(\textrm{cos}(2\pi t), \textrm{sin}(2\pi t))$.

A point $x=(x_1,\dots,x_{2k})\in \Delta_{2k-1}$ corresponds to $2k$ points on $U$, given by $f_i(x)=f(\sum_{j=0}^i x_{j})$ for $0\leq i\leq 2k-1$, where $x_0=0$.  
For $i=0,\dots, k-1$, let $\ell_i(x)=\ell_{i+k}(x)$ be the line segment connecting $f_i(x)$ and $f_{i+k}(x)$.
Let $L(x)=\bigcup_{i=0}^{k-1} \ell_i(x)$.
As before, for $i\in [2k]$, let $P_i(x)$ be  the open region  bounded by the lines $\ell_i(x)$ and  $\ell_{i-1}(x)$  and the arc between $f_{i-1}(x)$ and $f_i(x)$. Further, for $i\in [2k]$, define recursively the regions $T_i(x)$ as before: $T_1(x)=P_1(x)$, and $T_i(x) = P_i(x) \setminus \bigcup_{j=1}^{i-1} T_j(x)$ for all $2\le i \le 2k$. Finally, let $R_i(x)=T_i(x) \setminus L(x)$. Note that   $L(x)\cup (\bigcup_{i=1}^{2k} R_i(x)) = D-U$, the regions $R_1(x),\dots, R_{2k}(x)$ are pairwise disjoint and open, and  $R_i(x)=\emptyset$ when $x_{i}=0$.

For every $i,j\in [2k]$, define  sets $A^j_i \subseteq \Delta_{2k-1}$ as follows:  $x \in A^j_i$ if and only if there exists a set $S \in \F_j$ so that $S$ is contained in $R_i(x)$. 

Assume for contradiction that for every $j\in [2k]$ we have $\Delta_{2k-1} = \bigcup_{i=1}^{2k} A^j_i$. Then by the same argument as in the previous proof,  for every $j\in [2k]$, the family $(A_1^j,\dots,A^j_{2k})$ is a KKM cover of $\Delta_{2k-1}$.

Hence, by the colorful KKM theorem, there exists $\pi\in S_{2k}$ such that $\bigcap_{i=1}^{2k} A^{\pi(i)}_i \neq \emptyset$. Let $x\in \bigcap_{i=1}^{2k} A^{\pi(i)}_i$. By the definition of the sets $A^j_i$,  for every $i\in [2k]$ there exists a set $F_i \in \F_{\pi(i)}$ satisfying $F_i \subset R_i(x)$. But then the sets $F_1,\dots,F_{2k}$ are pairwise disjoint,  violating  the condition of the theorem.
 
We conclude that there exists $j\in [2k]$ and $x\in \Delta_{2k-1}$ such that 
$x \in \Delta_{2k-1} \setminus (\bigcup_{i=1}^{2k} A^j_i)$. By the same argument as before, this means that $\F_j$ is pierced by the $k$ 
lines $\ell_i(x)$, $0\le i \le k-1$. 
This completes the proof of the theorem.
\end{proof}

\section{Proof of Theorem \ref{main}}
We apply Theorem \ref{main2} with the family  $\F'$ of pairwise intersections. Let $k=\lfloor \frac{r}{2}\rfloor$. By Theorem \ref{main2}, there exist $k+1$ lines $\ell_0,\dots,\ell_k$ in $\R^2$ piercing $\F'$. Write $L = \bigcup_{i=0}^k \ell_i$.

For $S\in \F$, let $\bar{S} = S \cap L$. Then $\bar{\F} = \{\bar{S} \mid S\in \F \}$ is a family of $(k+1)$-intervals. We claim that $\bar{\F}$ has the $(p+1,2)$ property. Indeed, suppose for contradiction that there exist pairwise disjoint  sets $\bar{S}_1,\dots, \bar{S}_{p+1} \in \bar{\F}$.
Since $\F$ has the $(p+1,2)$ property,  there exist integers $1\le q<q'  \le p+1$ so that $S_q \cap S_{q'} \neq \emptyset$. But then  $S_q \cap S_{q'} \in \F'$, which means that there exists some  line $\ell_i$, so that $S_j \cap S_k \cap \ell_i \neq \emptyset$. However, this contradicts  $\bar{S_q} \cap \bar{S_{q'}} =\emptyset$. 

If $k=0$ then $r=1$ and $\bar{\F}$ is a family of intervals. Therefore, by Theorem \ref{gallai}, we have $\tau(\bar{\F}) \le  p$. If $k\ge 1$, then by Theorem \ref{dintervals} we have $\tau(\bar{\F}) \le  ((k+1)^2-(k+1))p = (\lfloor \frac{r}{2} \rfloor ^2 +\lfloor\frac{r}{2} \rfloor)p$. 

Finally, we claim that $\tau (\F) \le \tau(\bar{\F})$, which will conclude the proof of the theorem. Indeed, let $S\in \F$. Since $S$ is not isolated, there exists some $S'\in \F$ with $S\cap S' \neq \emptyset$, and moreover, there exists some $i\in \{0,\dots,k\}$ with $S\cap S'\cap \ell_i \neq \emptyset$. This implies that $\bar{S} \neq \emptyset$ for all $S\in \F$, entailing the claim. \qed

\section*{Acknowledgment} The author is grateful to Joseph Miller for his help with finding the example after Theorem \ref{main2}. The author is also grateful to Daniel McGinnis for useful comments and for his help with generating the figures.

\end{document}